\documentclass[11pt]{article}
\usepackage[dvipsnames,x11names]{xcolor} 
\usepackage{epsfig}
\usepackage{lscape}
\usepackage{amssymb} 
\usepackage{graphicx}
\usepackage{footnote}
\usepackage[utf8]{inputenc} 
\usepackage{mathtools}
\usepackage{amsthm}
\usepackage{wasysym}
\usepackage{amsfonts}
\usepackage[english]{babel}
\usepackage{bigints}
\usepackage{mathrsfs}
\usepackage{lineno}
\usepackage{nicefrac}
\usepackage{enumerate}
\usepackage{appendix}
\usepackage{graphicx}
\usepackage{commath}
\usepackage{multirow}
\usepackage{tabu}

\usepackage{times}
\usepackage[T1]{fontenc}

\usepackage{algorithm} 
\usepackage{algpseudocode} 
\usepackage{booktabs}

\usepackage[most]{tcolorbox}
\usepackage{mdframed}

\usepackage{amsmath}
\usepackage{mathtools} 
\usepackage{mathrsfs}
\usepackage{tabularx}

\usepackage[]{natbib}
\usepackage{csquotes}

\usepackage[colorlinks=true,citecolor=blue,linkcolor=black,urlcolor=blue]{hyperref}

\usepackage[margin=2cm]{geometry}

\allowdisplaybreaks[1]
\usepackage{pifont}

\makeindex

\usepackage{sectsty}
\sectionfont{\color{BrickRed}}

\sectionfont{\color{RoyalBlue4}}
\subsectionfont{\color{RoyalBlue4}}
\subsubsectionfont{\color{RoyalBlue4}}
\paragraphfont{\color{RoyalBlue4}}

\newtheorem{definition}{Definition}[section]
\newtheorem{theorem}{Theorem}[section]

\newtheorem{remark}[theorem]{Remark}

\newcommand{\imag}{\textbf{i}}

\newcommand{\ImagPart}{\mathfrak{Im}}
\renewcommand{\Im}{\mathfrak{Im}}

\newcommand{\error}{\mathcal{O}}

\begin{document}
	
	\begin{flushleft}
		
		{\huge\bfseries\color{DodgerBlue4} From complex--step differentiation to a general reconstruction framework\par}
		\vspace{1.5em}  

		Rafael Abreu\textsuperscript{1,*} and Chahana Nagesh\textsuperscript{1}\\[0.5em]  
		
		\scriptsize 
		\textsuperscript{1} \textit{Institut de Physique du Globe de Paris, CNRS, Universit\'e de Paris, Paris, France}  \\[1em]  
		\textsuperscript{*} {email: rabreu@ipgp.fr}
		
		\normalsize 
	\end{flushleft}

	\begin{abstract}
		The complex-step method is traditionally derived from the Taylor
		expansion of an analytic function and is widely used as a numerical
		technique for derivative approximation. We present an alternative
		formulation based on the Cauchy--Riemann equations and show that the
		classical complex-step relation arises naturally from the harmonic
		structure of holomorphic functions. In particular, the complex-step
		method admits two complementary harmonic interpretations: as a Cauchy
		problem, in which the derivative is identified with the normal datum of
		the imaginary component on the real axis, and as a reconstruction
		problem in a strip, in which the finite imaginary perturbation provides
		the upper-boundary data. The latter formulation leads explicitly to the
		strip Poisson and conjugate Poisson kernels and their derivatives.
		
		A related harmonic reconstruction framework in the upper half-plane
		leads to the Poisson, conjugate Poisson, and Cauchy kernels as elementary
		reconstruction operators for harmonic and holomorphic functions.
		Extending this reconstruction from ordinary boundary functions to finite
		measures yields the classical Stieltjes transform and its inversion
		formula. The same measure-theoretic structure appears in spectral
		theory, where scalar matrix elements of the resolvent are Stieltjes
		transforms of the associated spectral measures. These results establish
		a common complex-analytic structure connecting complex-step
		differentiation, harmonic reconstruction, Stieltjes inversion, and
		spectral reconstruction, while distinguishing the boundary-value
		problems through which the corresponding information is recovered.
	\end{abstract}
	
	\noindent{\footnotesize
		\textbf{Keywords:}
		Complex-step method;
		Cauchy--Riemann equations;
		Poisson kernel;
		Hilbert transform;
		Cauchy kernel;
		Stieltjes transform;
		harmonic reconstruction.
	}
	
	\newpage
	
	\section{Introduction}
	
	The complex-step method (CSM), originally introduced by \citet{Squire1998}, is commonly presented as a numerical technique for derivative approximation. Its practical success is generally attributed to the fact that imaginary perturbations avoid the subtractive cancellation errors that limit conventional finite-difference formulas \citep{Squire1998,Abreu201384}. From this viewpoint, the complex perturbation is regarded primarily as a numerical tool for computing derivatives of analytic functions.
		
	A similar use of imaginary perturbations appears in a seemingly unrelated area of mathematics. The classical Stieltjes inversion formula reconstructs a measure from its Stieltjes transform by evaluating its analytic continuation at points \(z=x+\imag\epsilon\) lying in the upper half-plane, where \(x\in\mathbb{R}\) denotes the location on the real axis and \(\epsilon>0\) the distance from the boundary. As the boundary is approached \((\epsilon\rightarrow0^+)\), the imaginary component converges to the underlying measure.
	
	The appearance of the same imaginary perturbation in apparently
	different mathematical constructions suggests the existence of a common
	underlying complex-analytic structure. Although the complex-step method
	and the Stieltjes inversion formula have traditionally been developed
	for different purposes---derivative approximation and measure
	reconstruction, respectively---both make use of analytic continuation
	away from the real axis and recover information through its real and
	imaginary components.
	
	The purpose of the present work is to investigate these connections from
	the viewpoint of harmonic analysis. To do so, we first show that the
	classical complex-step approximation follows directly from the
	Cauchy--Riemann equations, rather than being viewed solely as a
	consequence of the Taylor series expansion. Since the Cauchy--Riemann
	equations imply that the real and imaginary components of a holomorphic
	function are harmonic, the complex-step method admits two complementary
	harmonic interpretations. It can be formulated as a Cauchy problem on
	the real boundary, where the derivative is identified with the normal
	datum of the imaginary component, or as a Dirichlet reconstruction
	problem in a strip, where the finite imaginary perturbation provides the
	data at the upper boundary. A related reconstruction framework arises
	in the upper half-plane through the Poisson and conjugate Poisson
	kernels and extends naturally from ordinary boundary functions to finite
	measures and spectral measures.
	
	The remainder of the paper is organized as follows. We first review the
	classical derivation of the complex-step approximation based on the
	Taylor series expansion and then derive the same relation directly from
	the Cauchy--Riemann equations. We introduce harmonic reconstruction in
	the upper half-plane through the classical Poisson, conjugate Poisson,
	and Cauchy kernels and subsequently develop the two complementary
	harmonic formulations of the complex-step method: reconstruction in a
	strip and reconstruction from Cauchy data. Extending the upper-half-plane
	reconstruction from ordinary boundary functions to finite measures leads
	to the Stieltjes transform and its inversion formula, and the resulting
	measure-theoretic structure is then connected with spectral measures
	through the resolvent.
	
	\section{The complex step method from Taylor series}
	
	The numerical computation of derivatives plays a central role in scientific computing. Unlike conventional finite-difference methods, the complex-step method evaluates the function at an imaginary perturbation \citep{Squire1998,Abreu201384}. To evaluate the function at imaginary perturbations, we first extend the original real-valued function into the complex plane. Let
	\begin{align*}
		F:\Omega\subset\mathbb C\rightarrow\mathbb C
	\end{align*}
	be a holomorphic extension of the real-valued function
\begin{align*}
	f:\Omega\cap\mathbb R\rightarrow\mathbb R,
\end{align*}
	so that
\begin{align*}
	F(x)=f(x),
	\qquad
	x\in\Omega\cap\mathbb R.
\end{align*}

	The standard forward and centered finite difference approximations of the first-order derivative are given by the following expressions respectively \citep{Thomas1995}
	\begin{align}
		& \partial_x f(x) = \frac{f(x+h) - f(x)}{h} + \error \left(h\right), & \partial_x f (x) = \frac{f(x+h) - f(x-h)}{2h} + \error \left(h^2\right).&
		\label{eq.FD_approximations}
	\end{align}
	
	The classical derivation of the complex-step approximation \citep{Squire1998} is obtained by expanding the analytic continuation \(F\) in a Taylor series about the point \(x\),
	\begin{align}
		F(x + \imag h) & = F(x) + \imag h \partial_x F (x) - \frac{h^2}{2} \partial_x^2 F (x)  + ... + \frac{(\imag h)^n}{n!} \partial_x^n F(x)  = \sum_{n=0}^{\infty}\frac{(\imag h)^{n}}{n!}\partial_x^n  F(x),
		\label{eq.Taylor_Series_Deltax}
	\end{align}
	where $\imag$ is the imaginary unit and $h\to 0$. Since \(F(x)=f(x)\in\mathbb R\) on the real axis, taking the imaginary part of eq. \eqref{eq.Taylor_Series_Deltax} yields
	\begin{align}
		\partial_x F (x)  = \partial_x f (x) = \frac{\ImagPart \left[F(x+\imag h)\right]}{h} ,
		\label{eq.GCS_CSFirstOrder}
	\end{align}
	where we have committed the error term corresponding to the truncation of the Taylor series. 
	
	The complex step CS derivative (eq. \eqref{eq.GCS_CSFirstOrder}) is more efficient than conventional FD techniques (eq. \eqref{eq.FD_approximations}) in many applications because it helps to avoid numerical cancellation errors \citep[e.g.][]{anderson2001sensitivity,al2010complex,Voorhees20111146,martins2001connection,martins2003complex,dziatkiewicz2016complex,Jin2010,Kim2006177,Burg2003,Wang2006,ridout2009,Abreu201384,Abreu01072015,ABREU2018390}.
	
	This standard derivation presents the complex-step approximation as a consequence of the Taylor expansion. In the following section, we show that the same approximation follows directly from the Cauchy--Riemann equations. This alternative derivation shows that the complex-step approximation is fundamentally a consequence of the harmonic structure of holomorphic functions, from which the Poisson, Hilbert and Cauchy kernels emerge naturally.
	
	\section{The complex step method from the Cauchy--Riemann equations}
	
	We next show that the complex-step approximation (eq. \eqref{eq.GCS_CSFirstOrder}) is a direct consequence of the Cauchy--Riemann equations.
	
	\begin{theorem}[Complex-step approximation from the Cauchy--Riemann equations]
		Let
		\[
		F:\Omega\subset\mathbb{C}\rightarrow\mathbb{C}
		\]
		be holomorphic in a neighborhood of an interval of the real axis, and assume that
		\[
		F(x)\in\mathbb{R},
		\qquad x\in\Omega\cap\mathbb{R}.
		\]
		Then, for every
		\(x\in\Omega\cap\mathbb{R}\),
		\begin{align}
			\partial_xF(x)
			=
			\lim_{h\rightarrow0}
			\frac{\ImagPart\!\left[F(x+\imag h)\right]}{h}.
			\label{eq.Complex_Step_relation}
		\end{align}
	\end{theorem}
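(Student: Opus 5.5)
We plan to write $F(x+\imag y)=u(x,y)+\imag\, v(x,y)$ with $u,v$ real-valued and $C^1$ (indeed $C^\infty$) in a neighborhood of the real interval, and to use only the Cauchy--Riemann equations
\begin{align*}
	\partial_x u=\partial_y v,\qquad \partial_y u=-\partial_x v ,
\end{align*}
together with the boundary condition coming from the real-valuedness of $F$ on the axis. The hypothesis $F(x)\in\mathbb{R}$ for $x\in\Omega\cap\mathbb{R}$ says exactly that $v(x,0)=0$ along the interval and $u(x,0)=f(x)$.

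The key steps, in order, are as follows. First, since $F$ is holomorphic, $F'(x)=\partial_x u+\imag\,\partial_x v$ evaluated at $y=0$. Differentiating $v(x,0)\equiv0$ in $x$ gives $\partial_x v(x,0)=0$, so $\partial_xF(x)=\partial_x u(x,0)$, which is real. Second, the first Cauchy--Riemann equation converts this tangential derivative of $u$ into the normal derivative of the imaginary part:
\begin{align*}
	\partial_x F(x)=\partial_x u(x,0)=\partial_y v(x,0).
\end{align*}
Third, we write the normal derivative as a one-sided difference quotient in $y$ and use $v(x,0)=0$:
\begin{align*}
	\partial_y v(x,0)=\lim_{h\to0}\frac{v(x,h)-v(x,0)}{h}=\lim_{h\to0}\frac{\ImagPart\left[F(x+\imag h)\right]}{h},
\end{align*}
which is exactly eq.~\eqref{eq.Complex_Step_relation}. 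If an error estimate is wanted, apply the mean value theorem in $y$: it gives $v(x,h)/h=\partial_y v(x,\theta h)$ for some $\theta\in(0,1)$. Continuity of $\partial_y v$ then yields the limit, and a second-order Taylor bound in $y$ yields the $\error(h^2)$ rate, because $\partial_y^2 v(x,0)=-\partial_x^2 v(x,0)=0$ by harmonicity of $v$.

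The main obstacle is not computational but one of justification. We must make sure that $x+\imag h$ lies in $\Omega$ for all small $|h|$, which holds because $\Omega$ is an open neighborhood of the interval. We must also make sure that $v$ is differentiable in $y$ at $y=0$ and that the relation $\partial_x v(x,0)=0$ is legitimate, which needs $v(\cdot,0)=0$ on a full interval around $x$ rather than at a single point. Both follow from holomorphy, which gives real-differentiability of $u,v$ and the Cauchy--Riemann equations at every point of $\Omega$, and from the openness of $\Omega\cap\mathbb{R}$ in $\mathbb{R}$. These points will be stated explicitly. The proof avoids the Taylor series of eq.~\eqref{eq.Taylor_Series_Deltax} entirely.
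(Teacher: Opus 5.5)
Your proposal is correct and follows essentially the same route as the paper: use $V(x,0)=0$, apply the Cauchy--Riemann relation $\partial_x F(x)=\partial_x U(x,0)=\partial_y V(x,0)$, and write $\partial_y V(x,0)$ as the limit of the difference quotient $V(x,h)/h=\ImagPart[F(x+\imag h)]/h$. The paper leaves implicit two points you make explicit, and both are correct: $\partial_x V(x,0)=0$ needs $V(\cdot,0)\equiv 0$ on an interval, and the $\error(h^2)$ rate follows from $\partial_y^2V(x,0)=-\partial_x^2V(x,0)=0$.
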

	
	\begin{proof}
	Since \(F\) is real-valued on the real axis,
\begin{align*}
	V(x,0)=0.
\end{align*}
From the Cauchy--Riemann equations we can write,
\begin{align*}
	\partial_x F(x)
	=
	\partial_x U(x,0)
	=
	\partial_y V(x,0).
\end{align*}
By the definition of the partial derivative,
\begin{align*}
	\partial_y V(x,0)
	&=
	\lim_{h\rightarrow0}
	\frac{V(x,h)-V(x,0)}{h} \\
	&=
	\lim_{h\rightarrow0}
	\frac{V(x,h)}{h}.
\end{align*}
Since
\begin{align*}
	V(x,h)=\ImagPart\!\left[F(x+\imag h)\right],
\end{align*}
we obtain
\begin{align*}
	\partial_x F(x)
	=
	\lim_{h\rightarrow0}
	\frac{\ImagPart\!\left[F(x+\imag h)\right]}{h},
\end{align*}
which proves the theorem.
	\end{proof}

\section{The complex step method as a reconstruction problem}

The previous theorem establishes that the imaginary component of the
analytic continuation is a harmonic function. A fundamental property
of harmonic functions is that their values within the domain are
determined by appropriate boundary data. This means that the complex-step
method can be interpreted not merely as a differentiation technique,
but within the framework of harmonic reconstruction. This perspective
naturally places the complex-step method within the classical theory of
harmonic boundary-value problems, which we now develop.

Since \(F\) is holomorphic we can write
\begin{align*}
	F(z) = U(x,y) + \imag V(x,y),
\end{align*}
and the real \(U(x,y)\) and imaginary \(V(x,y)\) components satisfy the
Cauchy--Riemann equations and, consequently, both components are
harmonic, i.e.,
\begin{align*}
	\Delta U = 0, \qquad \Delta V = 0,
\end{align*}
where \(\Delta=\partial_x^2+\partial_y^2\) is the Laplacian. We first develop the classical reconstruction of harmonic and holomorphic functions from boundary data in the upper half-plane. We then show that the boundary conditions associated with the complex-step method lead naturally to the corresponding reconstruction problem in a strip.

	\subsection{Reconstruction from a Dirac boundary}
	
    Since the imaginary component $V(x,y)$ can be recovered, up to an additive constant, as the harmonic conjugate of the real component $U(x,y)$ using the Cauchy--Riemann equations, it is sufficient to reconstruct only $U$. We first consider the reconstruction problem for a Dirac delta prescribed at the boundary. Given boundary data
	\begin{align*}
		U(x,0)=u_0(x) ,
	\end{align*}
	we seek a harmonic function satisfying
	\begin{align*}
		\Delta U=0,\qquad y>0,\qquad \lim_{y\to0^+}U(x,y)=u_0(x).
	\end{align*}
	The solution of the Dirichlet problem in the upper half-plane is obtained by prescribing the elementary boundary datum. We first consider the elementary boundary datum as a Dirac delta
	\begin{align*}
		u_0(x)=\delta(x),
	\end{align*}
	for which the solution of the Laplace equation is given by the Poisson kernel
	\begin{align}
		P_y(x)=\frac{1}{\pi}\frac{y}{x^2+y^2},
		\label{eq.Poisson_kernel}
	\end{align}
	which satisfies
	\begin{align*}
		\Delta P_y=0,\qquad y>0,\qquad \lim_{y\to0^+}P_y(x)=\delta(x).
	\end{align*}
	
	The Poisson kernel therefore reconstructs only the real component of the holomorphic function. To recover the complete analytic continuation, the corresponding iaginary component must be determined. Since the real and imaginary components are harmonic conjugates, the Cauchy--Riemann equations uniquely determine $V$ from the reconstructed field $U$. For the elementary solution
	\begin{align}
		U(x,y)=P_y(x)=\frac{1}{\pi}\frac{y}{x^2+y^2},
	\end{align}
	we compute
	\begin{align}
		\partial_xU(x,y)
		=
		-\frac{2xy}{\pi(x^2+y^2)^2}.
	\end{align}
	Since the Cauchy--Riemann equations require
	\begin{align*}
		\partial_xU=\partial_yV,
	\end{align*}
	we seek a function \(V\) satisfying this relation. Integrating with respect to \(y\) gives
	\begin{align}
		V(x,y)
		&=
		\int
		-\frac{2xy}{\pi(x^2+y^2)^2}\,dy =
		\frac{1}{\pi}\frac{x}{x^2+y^2}+C(x).
	\end{align}
	where \(C(x)\) is an arbitrary function of \(x\). The second Cauchy--Riemann equation,
	\begin{align*}
		\partial_yU=-\partial_xV,
	\end{align*}
	implies that \(C'(x)=0\), so \(C(x)\) is constant. Thus, up to an additive constant,
	\begin{align}
		V(x,y)
		=
		\frac{1}{\pi}\frac{x}{x^2+y^2} = Q_y (x),
	\end{align}
	where \(Q_y\) is the conjugate Poisson kernel. The elementary
	holomorphic solution is therefore
	\begin{align}
\begin{aligned}
			F(z)
	&=
	\underbrace{P_y(x)}_{\text{Poisson kernel}}
	+
	\imag\,
	\underbrace{Q_y(x)}_{\text{conjugate Poisson kernel}} \\
	&=
	\frac{1}{\pi}
	\frac{y+\imag x}{x^2+y^2}
	=
	\frac{\imag}{\pi z},
	\qquad z=x+\imag y.
	\label{eq.Cauchy_Hilbert_Poisson}
\end{aligned}
	\end{align}

	\subsection{Reconstruction from arbitrary boundary data}
	
	Since the Laplace equation is linear, arbitrary boundary data can now be reconstructed by superposing translated copies of the elementary solution \citep{axler2001harmonic}. For general boundary data $u_0(x)$, the harmonic solution is obtained by convolving the Poisson kernel $P_y$ (eq. \eqref{eq.Poisson_kernel}) with the prescribed boundary data  
		\begin{align}
			U(x,y)
			=
			\int_{-\infty}^{\infty}
			P_y(x-t)\,u_0(t)\,dt
			=
			(P_y*u_0)(x).
			\label{eq.Convolution_real}
		\end{align}
	The corresponding imaginary component, recovered as the harmonic conjugate of $U$, is 
	\begin{align}
		V(x,y)
		=
		\int_{-\infty}^{\infty}
		Q_y(x-t)\,u_0(t)\,dt
		=
		(Q_y*u_0)(x),
	\end{align}
	where $	Q_y(x)$ is the conjugate Poisson kernel. Note that in the limit \(y\to0^+\), the conjugate Poisson kernel
	\(Q_y(x)\) is singular at \(x=0\). Thus, the limit is understood
	in the sense of the Cauchy principal value,
	\begin{align}
		\lim_{y\to0^+}Q_y(x)
		=
		\frac{1}{\pi}\,
		\mathrm{p.v.}\!\left(\frac{1}{x}\right) ,
	\end{align}
	where
	\begin{align*}
		\mathrm{p.v.}\!\int_{-\infty}^{\infty}\frac{f(t)}{x-t}\,dt
		:=
		\lim_{\epsilon\to0^+}
		\left(
		\int_{-\infty}^{x-\epsilon}
		\frac{f(t)}{x-t}\,dt
		+
		\int_{x+\epsilon}^{\infty}
		\frac{f(t)}{x-t}\,dt
		\right).
	\end{align*}
	Consequently, while the prescribed boundary value of the real component is $U(x,0)=u_0(x)$, the boundary value of its harmonic conjugate is determined by the corresponding principal-value convolution,
	\begin{align}
		V(x,0)
		=
		\frac{1}{\pi}\,
		\mathrm{p.v.}\!\int_{-\infty}^{\infty}
		\frac{u_0(t)}{x-t}\,dt.
	\end{align}
	Thus, the boundary value of \(V\) is not prescribed independently,
	but follows from the harmonic conjugacy relation.
	
	While the Poisson kernel converges to the Dirac distribution and reconstructs the prescribed boundary data, the conjugate Poisson kernel converges to the singular kernel defining the classical Hilbert transform \citep{axler2001harmonic}. Combining the Poisson reconstruction with its harmonic conjugate recovers
	the complete holomorphic function,
	\begin{align}
		F(z)
		=
		\underbrace{U(x,y)}_{\text{Poisson}}
		+
		\imag\,
		\underbrace{V(x,y)}_{\text{conjugate Poisson}},
		\qquad z=x+\imag y.
		\label{eq.Cauchy_Hilbert_Poisson_General_boundary}
	\end{align}
	Using convolution notation, this becomes
	\begin{align}
		F(z)
		=
		(P_y*u_0)(x)
		+
		\imag\,(Q_y*u_0)(x),
		\qquad z=x+\imag y.
		\label{eq.Integral_Cauchy_Hilbert_Poisson}
	\end{align}

\subsection{The complex step method from a strip}

The preceding reconstruction was formulated in the upper half-plane from prescribed boundary data for one harmonic component. The
complex-step method has a different boundary structure. Since \(F\) is real on the real axis, its imaginary component satisfies
\begin{align*}
	V(x,0)=0,
\end{align*}
whereas evaluation at the complex step \((x+\imag h)\) determines its value at the parallel boundary
\begin{align*}
	V(x,h)
	=
	\ImagPart\!\left[F(x+\imag h)\right].
\end{align*}
The natural reconstruction domain is therefore the strip \(0<y<h\). We now show that the complex-step equation (eq. \eqref{eq.Complex_Step_relation}) arises directly from this strip reconstruction.

\begin{theorem}[Complex-step differentiation from harmonic reconstruction in a strip]
	Let
	\begin{align*}
		F(z)=U(x,y)+\imag V(x,y)
	\end{align*}
	be holomorphic in a neighborhood of the closed strip \(0\leq y\leq h\), with real values on the real axis, so that
	\begin{align*}
		V(x,0)=0.
	\end{align*}
	For each \(h>0\), consider the strip \(0<y<h\) and define the
	upper-boundary data
	\begin{align}
		g_h(x)
		=
		V(x,h)
		=
		\ImagPart\!\left[F(x+\imag h)\right].
		\label{eq.CS_strip_data}
	\end{align}
	The harmonic reconstruction of \(V\) in the strip is given by
	\begin{align}
		V(x,y)
		=
		\left(P_h(\cdot,y)*g_h\right)(x),
		\qquad 0<y<h,
		\label{eq.CS_strip_convolution}
	\end{align}
	where
	\begin{align}
		P_h(x,y)
		=
		\frac{1}{2h}
		\frac{\sin(\pi y/h)}
		{\cosh(\pi x/h)+\cos(\pi y/h)} ,
		\label{eq.CS_strip_Poisson}
	\end{align}
	is the Poisson kernel associated with the upper boundary of the strip. Then
	\begin{align}
\begin{aligned}
			\partial_xF(x)
	&=
	\lim_{h\to0^+}
	\frac{g_h(x)}{h}
	\\
	&=
	\lim_{h\to0^+}
	\frac{\ImagPart\!\left[F(x+\imag h)\right]}{h}.
	\label{eq.CS_strip_limit}
\end{aligned}
	\end{align}
\end{theorem}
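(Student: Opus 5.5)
The argument has two parts. First, justify the representation \eqref{eq.CS_strip_convolution}: since $V$ is harmonic in the strip with $V(x,0)=0$ and $V(x,h)=g_h(x)$, the strip Dirichlet problem with zero lower data is solved by convolving the upper data with the strip Poisson kernel \eqref{eq.CS_strip_Poisson}. Second, show that the limit \eqref{eq.CS_strip_limit} follows from this representation together with the earlier theorem deriving the complex-step relation from the Cauchy--Riemann equations.

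For the representation, I would check three properties of $P_h(x,y)$ directly, using the conformal map $w=e^{\pi z/h}$, which sends the strip $0<y<h$ onto the upper half-plane.
\begin{itemize}
\item $P_h$ is harmonic, because it is (up to a constant) the imaginary part of a logarithm of a M\"obius image of $w$.
\item $P_h(x,y)\to 0$ as $y\to 0^+$ for $x\neq 0$, since $\sin(\pi y/h)\to 0$.
\item $P_h(\cdot,y)\to\delta$ as $y\to h^-$. The denominator $\cosh(\pi x/h)+\cos(\pi y/h)$ vanishes only at $x=0$, $y=h$. The total mass is $\int P_h(x,y)\,dx = y/h$, which tends to $1$.
\end{itemize}
The mass identity comes from the standard integral $\int_{\mathbb R}\frac{\sin a}{\cosh t+\cos a}\,dt = 2a$ with $a=\pi y/h$, after the substitution $t=\pi x/h$.

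Uniqueness requires a growth condition. I would assume $V$ is bounded on the closed strip, or has at most subexponential growth in $x$, and apply the Phragm\'en--Lindel\"of principle for the strip to the difference of $V$ and the convolution. This yields \eqref{eq.CS_strip_convolution}. Stating that growth hypothesis explicitly is the main obstacle. Without it, uniqueness fails: for example, $\sin(\pi y/h)\,\sinh(\pi x/h)$ vanishes on both boundaries. So I would add the hypothesis as an implicit standing assumption rather than try to avoid it.

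For the limit, the representation is consistent with the data, but the derivative information enters through the vanishing of $V$ on the lower boundary. I would argue that $g_h(x)/h = V(x,h)/h$ and invoke the preceding theorem: $V(x,0)=0$ and $\partial_yV(x,0)=\partial_xU(x,0)=\partial_xF(x)$ by Cauchy--Riemann, so $V(x,h)/h\to\partial_xF(x)$.

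As a consistency check inside the strip framework, I would differentiate \eqref{eq.CS_strip_convolution} in $y$ at $y=0$:
\[
\partial_y P_h(x,0)=\frac{\pi}{2h^2}\,\frac{1}{\cosh(\pi x/h)+1}.
\]
This kernel has mass $1/h$ and concentrates at the origin, so $\partial_yV(x,0)=\int \partial_yP_h(x-t,0)\,g_h(t)\,dt$. Writing $g_h(t)=h\,\partial_xF(t)+O(h^2)$ uniformly on compacts, by Taylor expansion in $y$ using holomorphy on a neighborhood of the strip, the right-hand side tends to $\partial_xF(x)$ as $h\to0$. This recovers \eqref{eq.CS_strip_limit} through the derivative of the strip Poisson kernel.
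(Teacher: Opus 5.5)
Your proof of the limit is the same as the paper's: $V(x,0)=0$ makes $g_h(x)/h$ the difference quotient for $\partial_yV(x,0)=\partial_xU(x,0)=\partial_xF(x)$. The paper only cites Widder for the strip representation, so your kernel checks and your Phragm\'en--Lindel\"of step are extra rigor. You are right to require a growth hypothesis: without one the representation fails even under the stated assumptions, since your counterexample is $\Im\cosh(\pi z/h_0)$, with $F(z)=\cosh(\pi z/h_0)$ entire and real on the real axis, yet $g_{h_0}\equiv 0$ while $V\not\equiv 0$.

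One slip: $P_h=\frac{1}{2h}\,\Im\tanh\bigl(\frac{\pi z}{2h}\bigr)=\Im\bigl(-\frac{1}{h}(1+w)^{-1}\bigr)$ is the imaginary part of a M\"obius image of $w=e^{\pi z/h}$ itself, not of its logarithm (such a logarithm has bounded imaginary part, whereas $P_h$ blows up at $(0,h)$). Harmonicity still follows, so the argument is unaffected.
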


\begin{proof}
	Since \(V(x,0)=0\), the definition of the normal derivative at the
	real axis gives
	\begin{align*}
		\partial_yV(x,0)
		=
		\lim_{h\to0^+}
		\frac{V(x,h)-V(x,0)}{h}
		=
		\lim_{h\to0^+}
		\frac{V(x,h)}{h}.
	\end{align*}
	Since \(F\) is real-valued on the real axis, the Cauchy--Riemann
	equations give
	\begin{align*}
		\partial_xF(x)
		=
		\partial_xU(x,0)
		=
		\partial_yV(x,0).
	\end{align*}
	Therefore,
	\begin{align*}
		\partial_xF(x)
		=
		\lim_{h\to0^+}
		\frac{V(x,h)}{h}.
	\end{align*}
	
	For each \(h>0\), the imaginary component is reconstructed in the
	strip as follows  \citep{widder1961functions}
	\begin{align*}
		V(x,y)
		=
		\left(P_h(\cdot,y)*g_h\right)(x),
		\qquad 0<y<h.
	\end{align*}
	The strip
	Poisson kernel is \citep{widder1961functions}
	\begin{align}
	P_h(x,y)
	=
	\frac{1}{2h}
	\frac{\sin(\pi y/h)}
	{\cosh(\pi x/h)+\cos(\pi y/h)} .
\end{align}
By definition of the upper-boundary data,
	\begin{align}
		g_h(x)
		:=
		V(x,h)
		=
		\ImagPart\!\left[F(x+\imag h)\right].
	\end{align}
	Substituting this relation into
	eq. \eqref{eq.CS_strip_limit} gives
	\begin{align}
\begin{aligned}
			\partial_xF(x)
	&=
	\lim_{h\to0^+}
	\frac{g_h(x)}{h}
	\\
	&=
	\lim_{h\to0^+}
	\frac{\ImagPart\!\left[F(x+\imag h)\right]}{h},
\end{aligned}
	\end{align}
	which proves the theorem.
\end{proof}

\begin{theorem}[Real-component reconstruction from imaginary strip data]
	Let
	\begin{align*}
		F(z)=U(x,y)+\imag V(x,y)
	\end{align*}
	be holomorphic in a neighborhood of the closed strip
	\(0\leq y\leq h\). Assume that the imaginary component satisfies
	\begin{align*}
		V(x,0)=0,
		\qquad
		V(x,h)=g_h(x).
	\end{align*}
	Then the harmonic reconstruction of \(V\) is
	\begin{align*}
		V(x,y)
		=
		\left(P_h(\cdot,y)*g_h\right)(x),
	\end{align*}
	where \(P_h\) is the Poisson kernel associated with the upper
	boundary of the strip (eq. \eqref{eq.CS_strip_Poisson}) (see \cite{widder1961functions}). The corresponding real component is given,
	up to an additive constant, by
	\begin{align*}
		U(x,y)
		=
		\left(Q_h(\cdot,y)*g_h\right)(x)+C,
	\end{align*}
	where
	\begin{align*}
		Q_h(x,y)
		=
		\frac{1}{2h}
		\frac{\sinh(\pi x/h)}
		{\cosh(\pi x/h)+\cos(\pi y/h)} ,
	\end{align*}
	is the conjugate Poisson kernel for the strip. In particular, at the
	real boundary \(y=0\),
	\begin{align}
		U(x,0)
		=
		\left[
		\frac{1}{2h}
		\tanh\left(
		\frac{\pi\,\cdot}{2h}
		\right)
		*g_h
		\right](x)+C.
	\end{align}
\end{theorem}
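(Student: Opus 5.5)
The plan is to exhibit $Q_h+\imag P_h$ as a single holomorphic kernel in the strip. The Cauchy--Riemann relations between $P_h$ and $Q_h$, and hence between the two convolutions, then follow automatically. The reconstruction $V=P_h(\cdot,y)*g_h$ is taken as given from the strip Dirichlet problem with $V(x,0)=0$, exactly as in the previous theorem \citep{widder1961functions}. The new content is the formula for $U$ and its boundary value.

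\textbf{Step 1: a holomorphic kernel.} Consider
\begin{align*}
G_h(z)=\frac{1}{2h}\tanh\!\left(\frac{\pi z}{2h}\right),\qquad z=x+\imag y .
\end{align*}
Put $a=\pi x/h$ and $b=\pi y/h$. The identity
\begin{align*}
\tanh\!\left(\tfrac{a+\imag b}{2}\right)=\frac{\sinh a+\imag\sin b}{\cosh a+\cos b}
\end{align*}
follows by multiplying numerator and denominator of $\sinh/\cosh$ by the conjugate. It gives
\begin{align*}
G_h(z)=Q_h(x,y)+\imag P_h(x,y).
\end{align*}
The poles of $G_h$ lie at $z=\imag h(2k+1)$, so $G_h$ is holomorphic in $0\le y<h$. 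Hence $Q_h$ and $P_h$ are harmonic conjugates there:
\begin{align*}
\partial_xQ_h=\partial_yP_h,\qquad \partial_yQ_h=-\partial_xP_h .
\end{align*}

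\textbf{Step 2: convolution preserves holomorphy.} Define $\tilde U(x,y)=(Q_h(\cdot,y)*g_h)(x)$ and $\tilde F=\tilde U+\imag V=(G_h*g_h)(x)$, where the convolution acts in $x$ at fixed $y$. I will differentiate under the integral sign in $x$ and $y$, which yields the Cauchy--Riemann equations for $(\tilde U,V)$ in the open strip. This requires a mild hypothesis on $g_h$, for instance $g_h\in L^1(\mathbb R)$, which I will state explicitly.

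On compact subsets of $0\le y\le h-\delta$, the denominator satisfies $\cosh a+\cos b\ge 1+\cos(\pi(1-\delta/h))>0$. Consequently $G_h$ and its first derivatives are uniformly bounded there, and dominated convergence applies. Note that $Q_h$ tends to $\pm 1/(2h)$ rather than decaying, which is why integrability of $g_h$, not just boundedness, is needed. This justification is the main obstacle. The kernel is singular only at the upper boundary point $(0,h)$, which the statement never uses, so working in $0\le y<h$ suffices.

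\textbf{Step 3: identification up to a constant, and the boundary formula.} Both $U$ and $\tilde U$ are harmonic conjugates of the same $V$ on the connected strip. Therefore $\nabla(U-\tilde U)=0$, so $U=\tilde U+C$. For the boundary value, set $y=0$, so $\cos b=1$. The half-angle identity $\sinh a/(\cosh a+1)=\tanh(a/2)$ gives
\begin{align*}
Q_h(x,0)=\frac{1}{2h}\tanh\!\left(\frac{\pi x}{2h}\right).
\end{align*}
Continuity of the convolution up to $y=0$ follows from the same domination argument, since the kernel is smooth and bounded near $y=0$. This yields the stated expression for $U(x,0)$.
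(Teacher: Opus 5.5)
Your proposal is correct and follows the paper's proof (take $V=P_h(\cdot,y)*g_h$ as given, pass the Cauchy--Riemann relations from the kernel pair to the convolutions, and evaluate $Q_h(x,0)$ by the half-angle identity); beyond the paper, you actually verify the kernel relations that the paper only asserts, via $Q_h+\imag P_h=\frac{1}{2h}\tanh\left(\frac{\pi z}{2h}\right)$, and you state explicitly the hypothesis $g_h\in L^1(\mathbb{R})$ that the non-decaying kernel $Q_h$ requires. Like the paper, you take for granted that the given $V$ equals the Poisson integral, and this tacitly needs a growth bound on $V$ in the strip: for instance, $F(z)=\cosh(\pi z/h)$ has $g_h\equiv 0$ but $V\not\equiv 0$.
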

  
\begin{proof}
	The imaginary component is reconstructed in the strip as
	\begin{align*}
		V(x,y)
		=
		\left(P_h(\cdot,y)*g_h\right)(x).
	\end{align*}
	Since \(U\) and \(V\) are harmonic conjugates, the
	Cauchy--Riemann equations give
	\begin{align*}
		\partial_xU=\partial_yV,
		\qquad
		\partial_yU=-\partial_xV.
	\end{align*}
	The strip Poisson kernel \(P_h\) and its conjugate kernel \(Q_h\)
	satisfy
	\begin{align*}
		\partial_xQ_h=\partial_yP_h,
		\qquad
		\partial_yQ_h=-\partial_xP_h.
	\end{align*}
	Therefore the corresponding real component is
	\begin{align*}
		U(x,y)
		=
		\left(Q_h(\cdot,y)*g_h\right)(x)+C,
	\end{align*}
	where \(C\) is an additive constant. At the real boundary \(y=0\),
	\begin{align*}
		\begin{aligned}
			Q_h(x,0)
			&=
			\frac{1}{2h}
			\frac{\sinh(\pi x/h)}
			{\cosh(\pi x/h)+1}
			\\
			&=
			\frac{1}{2h}
			\tanh\left(\frac{\pi x}{2h}\right).
		\end{aligned}
	\end{align*}
	Hence
	\begin{align*}
		U(x,0)
		=
		\left[
		\frac{1}{2h}
		\tanh\left(
		\frac{\pi\,\cdot}{2h}
		\right)
		*g_h
		\right](x)+C,
	\end{align*}
	which proves the theorem.
\end{proof}

\begin{remark}
	The strip formulation provides a finite-height interpretation of the
	complex-step construction. For each \(h>0\), the quantity
	\begin{align*}
		g_h(x)
		=
		\ImagPart\!\left[F(x+\imag h)\right]
	\end{align*}
	is the value of the imaginary component at the upper boundary of the
	strip, while \(V(x,0)=0\) at the lower boundary. The strip Poisson
	kernel reconstructs the imaginary component between these two
	boundaries. As the strip width tends to zero, the scaled upper-boundary
	value \(g_h(x)/h\) converges to the normal derivative
	\(\partial_yV(x,0)\), which, by the Cauchy--Riemann equations, equals
	the derivative of the real component along the real axis. Thus, the
	usual complex-step eq. \eqref{eq.Complex_Step_relation} is recovered as the shrinking-strip limit
	of the harmonic reconstruction.
\end{remark}

	\subsection{The complex step method from Cauchy reconstruction}

The complex-step construction admits a natural interpretation as a Cauchy boundary-value problem for a harmonic function. 

\begin{theorem}[Complex-step differentiation from Cauchy reconstruction]
	Let
	\begin{align*}
		F(z)=U(x,y)+\imag V(x,y),
		\qquad z=x+\imag y,
	\end{align*}
	be holomorphic in a neighborhood of the real axis and real-valued on
	the real axis. Then
	\begin{align*}
		V(x,0)=0.
	\end{align*}
	If the complementary Cauchy datum is denoted by
	\begin{align*}
		\partial_yV(x,0)=g(x),
	\end{align*}
	then
	\begin{align}
		\partial_xF(x)
		=
		\lim_{h\to0^+}
		\frac{\ImagPart[F(x+\imag h)]}{h}
		=
		g(x).
		\label{eq.CS_Cauchy_reconstruction}
	\end{align}
\end{theorem}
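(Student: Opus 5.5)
The plan is to reduce the statement to the Cauchy--Riemann argument already used in the proof of the first theorem (complex-step approximation from the Cauchy--Riemann equations), now read in the language of Cauchy data.

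\emph{First Cauchy datum.} Since $F$ is real-valued on $\Omega\cap\mathbb R$, we have $V(x,0)=\ImagPart[F(x)]=0$ for every real $x$ in the domain. This is the Dirichlet datum of the Cauchy problem for the harmonic function $V$.

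\emph{Second Cauchy datum.} Holomorphy of $F$ in a neighborhood of the real axis makes $U$ and $V$ continuously differentiable there, so $\partial_yV(x,0)$ exists. By definition it equals the complementary datum $g(x)$. The Cauchy--Riemann relation $\partial_xU=\partial_yV$, evaluated at $y=0$, then gives
\begin{align*}
\partial_xF(x)=\partial_xU(x,0)=\partial_yV(x,0)=g(x).
\end{align*}
Here I use that on the real axis $F(x)=U(x,0)$ and $\partial_xF=\partial_xU+\imag\,\partial_xV=\partial_xU$, since $V(\cdot,0)\equiv0$ implies $\partial_xV(x,0)=0$.

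\emph{The limit.} Write the normal derivative as a one-sided difference quotient and use $V(x,0)=0$:
\begin{align*}
g(x)=\partial_yV(x,0)=\lim_{h\to0^+}\frac{V(x,h)-V(x,0)}{h}=\lim_{h\to0^+}\frac{\ImagPart[F(x+\imag h)]}{h}.
\end{align*}
This is exactly the chain of equalities in eq.~\eqref{eq.CS_Cauchy_reconstruction}. Since $V$ is differentiable in $y$ at $y=0$, the two-sided derivative exists, so the one-sided limit agrees with it. The statement is therefore consistent with the first theorem and with the strip theorem.

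\emph{Main obstacle.} The only delicate point is justifying that the normal derivative exists and coincides with the one-sided limit. This follows because $F$ is holomorphic in an open neighborhood of the real point, so $V$ is smooth (indeed real-analytic) across $y=0$.

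Optionally, I would add a remark on the reconstruction side. By Cauchy--Kovalevskaya, or simply by the identity theorem, the Cauchy data $(V(\cdot,0),\partial_yV(\cdot,0))=(0,g)$ determine the harmonic function $V$ uniquely near the axis. This justifies calling the construction a "Cauchy reconstruction." It is not needed for the displayed identity, however.
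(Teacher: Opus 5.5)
Your proposal is correct and follows essentially the same route as the paper: $V(x,0)=0$ from realness of $F$ on the axis, then the Cauchy--Riemann relation $\partial_xF(x)=\partial_xU(x,0)=\partial_yV(x,0)=g(x)$, then the one-sided difference quotient with $V(x,0)=0$ to obtain the limit. Your extra justifications are sound refinements of points the paper leaves implicit: that $\partial_xV(x,0)=0$, that smoothness of $V$ makes the one-sided limit agree with the derivative, and the optional uniqueness remark.
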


\begin{proof}
	From the Cauchy--Riemann equations, and since \(F\) is real-valued
	on the real axis,
	\begin{align*}
		\partial_xF(x)
		=
		\partial_xU(x,0)
		=
		\partial_yV(x,0)
		=
		g(x).
	\end{align*}
	By the definition of the normal derivative (and using  \(V(x,0)=0\)) we can write,
	\begin{align*}
		\partial_yV(x,0)
		=
		\lim_{h\to0^+}
		\frac{V(x,h)}{h}.
	\end{align*}
	Finally,
	\begin{align*}
		V(x,h)
		=
		\ImagPart[F(x+\imag h)],
	\end{align*}
	and therefore
	\begin{align*}
		\partial_xF(x)
		=
		\lim_{h\to0^+}
		\frac{\ImagPart[F(x+\imag h)]}{h}
		=
		g(x),
	\end{align*}
	which proves the theorem.
\end{proof}

Thus, the complex-step expression emerges directly from the normal derivative associated with the harmonic Cauchy problem. In this interpretation, the imaginary displacement $h$ corresponds to a displacement away from the real axis along the transverse direction.

This formulation also clarifies the distinction with reconstruction from Dirichlet data in the upper half-plane. For prescribed values on the boundary, harmonic continuation can be represented by the Poisson kernel. In the present Cauchy formulation, however, \(V(x,0)\) is identically zero and the derivative information is carried by the normal datum \(\partial_yV(x,0)=g(x)\).

Cauchy reconstruction for Laplace's equation is fundamentally different from Poisson reconstruction. In particular, the Cauchy problem requires compatible analytic data and is, for general perturbed data, ill posed. In the complex-step setting, however, the harmonic components $U$ and $V$ arise from the same holomorphic function and are constrained by the Cauchy--Riemann relations. The complex-step method can consequently be interpreted as the recovery of the normal Cauchy datum from the imaginary component evaluated arbitrarily close to the real boundary.

   \section{Discussion}

   \subsection{Reconstruction from an arbitrary measure}	
	
	The previous sections developed two closely related harmonic reconstruction problems. First, arbitrary real boundary data were reconstructed in the upper half-plane through the Poisson and conjugate Poisson integral operators. Second, the complex-step method was formulated as a harmonic reconstruction in a strip, where the imaginary component vanishes on the real axis and its value at the upper boundary encodes the complex-step perturbation. We now extend the first of these reconstruction problems from ordinary boundary functions to finite measures. When the boundary data cannot be represented by ordinary functions alone, the natural extension is obtained by replacing the boundary density \(u_0(x)\) by a positive finite measure \(\mu\).
	\begin{definition}[Stieltjes Transform]
		Let $\mu$ be a positive, finite measure on the real line. The \emph{Stieltjes transform} of $\mu$ is defined as \citep{anderson2010introduction,HirschmanWidder1955,WidderLaplaceTransform}
		\begin{align}
			S_{\mu}(z) =  \int_{-\infty}^{\infty} \frac{d\mu(t)}{t - z}, \quad z \in \mathbb{C} \setminus \mathbb{R}.
			\label{eq.Stieltjes_Transform}
		\end{align}
	\end{definition}
	
	For $z \in \mathbb{C} \setminus \mathbb{R}$, both the real and imaginary parts of $(t-z)^{-1}$ are continuous functions of $t \in \mathbb{R}$, and are absolutely integrable with respect to any finite measure~$\mu$. Thus, the Stieltjes transform is the natural measure-theoretic extension of the Cauchy reconstruction formula
	\citep{anderson2010introduction,HirschmanWidder1955,WidderLaplaceTransform}.
	
	The Stieltjes inversion transform allows one to recover the original measure~$\mu$ from its transform~$S_\mu(z)$.
	
	\begin{definition}[Stieltjes Inverse Transform]
		For any open interval $I\in[a,b]$ whose endpoints are not atoms of $\mu$ (that is, points carrying a positive, nonzero amount of measure) \citep{anderson2010introduction,HirschmanWidder1955,WidderLaplaceTransform}, the measure of the interval is recovered as
		\begin{align}
			\begin{aligned}
				\mu((a,b]) 
				&= \lim_{\epsilon \to 0^+} \frac{1}{\pi} \int_I \frac{S_\mu(x + \imag\epsilon) - S_\mu(x - \imag\epsilon)}{2 \, \imag} \, dx \\
				&= \lim_{\epsilon \to 0^+} \frac{1}{\pi}  \int_I \Im \, S_\mu(x + \imag\epsilon) \, dx .
				\label{eq.Stieltjes_Inverse_Formula}
			\end{aligned}
		\end{align}
	\end{definition}
	
	The Stieltjes inversion formula thus provides a way to reconstruct the measure~$\mu$ from its transform~$S_{\mu}(z)$. The Stieltjes inversion formula also shows that the imaginary component of the analytic continuation contains the complete information required to reconstruct the original measure. In particular,
	\begin{align}
		\ImagPart\left[S_\mu(x+\imag\epsilon)\right]
		=
		\pi\,\frac{d\mu_\epsilon}{dx},
		\qquad \epsilon>0,
	\end{align}
	where \(d\mu_\epsilon/dx\) denotes the Poisson regularization of the measure \(\mu\).
	
	This result provides the measure-theoretic extension of the harmonic reconstruction considered above. For ordinary boundary functions, the Poisson integral reconstructs the harmonic extension of the prescribed boundary data. For arbitrary finite measures, the imaginary component of the Stieltjes transform gives the corresponding Poisson regularization of the measure. Thus, the same harmonic reconstruction principle extends naturally from ordinary boundary functions to finite measures, with the Stieltjes transform providing the corresponding complex-analytic representation.
	
    \subsection{Spectral theory and resolvents}
    
    The previous section showed that the complex-step method admits a natural extension from ordinary functions to arbitrary finite measures (using the Stieltjes transform). Spectral theory provides a natural application of this result because the spectral information of an operator can itself be encoded in a Stieltjes transform through its resolvent \citep[e.g.][]{hislop2012introduction,zworski2012semiclassical}.
    
    To make this connection understandable, consider a general operator \(P\). The spectrum of \(P\), denoted by \(\sigma(P)\), is the set of values \(z\), where \(z\in\mathbb{C}\) is the complex spectral parameter, for which \((P-z)\) does not possess a bounded inverse. The simplest example is provided by an eigenvalue. If
    \begin{align}
    	Pu_n=\lambda_nu_n,
    \end{align}
    then \(\lambda_n\) is a spectral value of \(P\). Equivalently,
    \begin{align}
    	(P-\lambda_n)u_n=0,
    \end{align}
    so that \((P-\lambda_n)\) cannot be inverted in the usual sense. Eigenvalues constitute one part of the spectrum; more generally, the spectrum may also contain a continuous component. For values \(z\) outside the spectrum, one may consider the inverse
    \begin{align}
    	R(z)=(P-z)^{-1}.
    \end{align}
    This inverse is called the resolvent of \(P\).  If \(P\) is self-adjoint, its spectrum is contained in the real axis, \(\sigma(P)\subset\mathbb{R}\), so that the resolvent is well defined for every \(z\in\mathbb{C}\setminus\mathbb{R}\).
    
    Let \(P\) be a self-adjoint operator on a Hilbert space \(\mathcal{H}\), and let \(E(\lambda)\) denote its projection-valued spectral measure. Then the resolvent of \(P\) admits the representation
    \begin{align}
    	(P-z)^{-1}
    	=
    	\int_{\mathbb{R}}
    	\frac{1}{\lambda-z}\,dE(\lambda),
    	\qquad
    	z\in\mathbb{C}\setminus\mathbb{R}.
    \end{align}
    For any \(u\in\mathcal{H}\), define the associated scalar spectral measure \(\mu_u\) by
    \begin{align*}
    	\mu_u(B)
    	=
    	\langle E(B)u,u\rangle,
    \end{align*}
    for every Borel set \(B\subset\mathbb{R}\). Taking the quadratic form of the resolvent gives
    \begin{align}
    	\left\langle (P-z)^{-1}u,u\right\rangle
    	=
    	\int_{\mathbb{R}}
    	\frac{d\mu_u(\lambda)}{\lambda-z},
    	\label{eq.quadratic_resolvent}
    \end{align}
    which shows that the scalar matrix element of the resolvent is precisely the Stieltjes transform of the spectral measure \citep[e.g.][]{demuth2005determining,akhiezer2020classical,teschl2014mathematical}. 
    
    The reconstruction framework developed in the previous section applies directly to spectral theory, with the only difference that the analytic continuation is now given by the scalar resolvent rather than by an arbitrary analytic function. Evaluating the resolvent at the same imaginary perturbation employed by the complex-step method,
    \begin{align*}
    	z=x+\imag\epsilon,
    	\qquad
    	\epsilon>0,
    \end{align*}
    gives
    \begin{align}
    	\ImagPart
    	\left\langle
    	\left(P-(x+\imag\epsilon)\right)^{-1}u,u
    	\right\rangle
    	=
    	\int_{\mathbb{R}}
    	\frac{\epsilon}
    	{(\lambda-x)^2+\epsilon^2}
    	\,d\mu_u(\lambda).
    \end{align}
    
    Since
    \begin{align}
    	P_{\epsilon}(x)
    	=
    	\frac{1}{\pi}
    	\frac{\epsilon}{x^2+\epsilon^2},
    \end{align}
    is the Poisson kernel, we obtain
    \begin{align}
    	\frac{1}{\pi}
    	\ImagPart
    	\left\langle
    	\left(P-(x+\imag\epsilon)\right)^{-1}u,u
    	\right\rangle
    	=
    	\left(P_{\epsilon}*\mu_u\right)(x).
    \end{align}
    Thus, the imaginary component generated by the complex-step perturbation is precisely the Poisson regularization of the spectral measure. Taking the limit
    \begin{align}
    	\frac{1}{\pi}
    	\ImagPart
    	\left\langle
    	\left(P-(x+\imag\epsilon)\right)^{-1}u,u
    	\right\rangle
    	\xrightarrow{\epsilon\to0^+}
    	\mu_u,
    	\label{eq.spectral_measure_distribution}
    \end{align}
    recovers the original spectral measure in the distributional sense.
        
    From the perspective developed in this paper, the spectral theorem reveals that the complex-step method also extends naturally to spectral theory. The previous sections showed that the imaginary component of an analytic continuation reconstructs ordinary boundary functions through the Poisson integral and, more generally, arbitrary finite measures through the Stieltjes inversion formula. Since the resolvent is itself the Stieltjes transform of the associated spectral measure, the same complex-step reconstruction principle applies directly to self-adjoint operators. Consequently, the imaginary component of the resolvent evaluated at the complex-step perturbation reconstructs the underlying spectral measure.
        
    \subsection{Semiclassical analysis}
       
    One of the most important applications of the reconstruction principle developed in the previous sections arises in semiclassical analysis. Since the semiclassical resolvent is itself the Stieltjes transform of the associated spectral measure, the complex-step interpretation developed throughout this paper applies directly to one of the central objects of modern quantum mechanics.
    
    The central object of semiclassical analysis is the semiclassical Schr\"odinger operator
    \begin{equation}
    	P_{\hbar}=-\hbar^2\Delta+V(x),
    	\label{eq.semiclassical_Schroedinger}
    \end{equation}
    where \(\Delta\) denotes the Laplacian, \(V(x)\) is the potential energy, and \(\hbar>0\) is the semiclassical parameter. The limit \(\hbar\rightarrow0\) describes the transition between quantum and classical mechanics \citep{zworski2012semiclassical}. The corresponding semiclassical resolvent
    \begin{equation}
    	R_{\hbar}(z)
    	=
    	(P_{\hbar}-z)^{-1},
    	\label{eq.resolvendt}
    \end{equation}
    is one of the central analytical objects of semiclassical analysis.
    
    For a self-adjoint operator, the resolvent encodes the spectral properties of the system and its boundary values determine the associated spectral measure. Its asymptotic behaviour as \(h\rightarrow0\) plays a central role in the analysis of wave propagation, scattering, and the correspondence between quantum and classical theories
    \citep{zworski2012semiclassical,dyatlov2019mathematical,teschl2014mathematical}.
    
    Since \(P_h\) is self-adjoint, the discussion of the previous section applies without modification. In particular,
    \begin{equation}
    	\langle R_{\hbar}(z)u,u\rangle
    	=
    	\int_{\mathbb R}
    	\frac{d\mu_h(\lambda)}
    	{\lambda-z},
    \end{equation}
    which means that the scalar matrix elements of the semiclassical resolvent are Stieltjes transforms of the corresponding spectral measures. Evaluating the semiclassical resolvent at the same imaginary perturbation employed by the complex-step method, the term 
    \begin{align}
    	\ImagPart \left\langle
    	R_{\hbar}(x+\imag\epsilon)u,u
    	\right\rangle ,
    \end{align}
    is the Poisson regularization of the corresponding spectral measure, exactly as in the general Stieltjes inversion formula. Thus, the imaginary component generated by the complex-step perturbation is precisely the Poisson regularization of the semiclassical spectral measure.
   
   \subsection{A general reconstruction framework}
   
   The previous sections reveal a common complex-analytic structure involving
   three closely related reconstruction problems. The complex-step method
   admits two complementary harmonic formulations. From data prescribed on
   the real axis, it can be interpreted as a Cauchy problem, in which the
   imaginary component vanishes and its normal derivative provides the
   derivative of the real component. Alternatively, for a finite complex
   displacement \(h\), it can be formulated as a Dirichlet problem in a
   strip, in which the imaginary component vanishes at the lower boundary
   and its value at the upper boundary is given by the complex-step
   perturbation. In the limit as the strip width tends to zero, the
   finite-height strip data recover the normal Cauchy datum.
   
   A third reconstruction problem arises in the upper half-plane. There,
   prescribed boundary values are reconstructed through the Poisson and
   conjugate Poisson kernels. This construction extends naturally from
   ordinary boundary functions to finite measures through the Stieltjes
   transform and, consequently, to spectral measures through the
   resolvent.
   
   All these formulations are related through the harmonic-conjugate structure
   of holomorphic functions, but correspond to different choices of domain
   and boundary data. The Cauchy and strip formulations provide two
   complementary interpretations of complex-step differentiation, whereas
   the upper-half-plane formulation describes reconstruction from prescribed
   boundary values and its extension to measures and spectral measures. The reconstruction problems considered in this work are summarized in
   Table~\ref{tab:unifying_reconstruction}.
   
   \begin{table}
   	\centering
   	\caption{Different reconstruction problems considered in this work.
   		The complex-step method admits complementary Cauchy and strip
   		formulations, while boundary functions, finite measures, and spectral
   		measures are reconstructed through the upper-half-plane framework.}
   	\label{tab:unifying_reconstruction}
   	\renewcommand{\arraystretch}{2.0}
   	\begin{tabular}{l p{4.5cm} c l}
   		\hline
   		\textbf{Problem}
   		&
   		\textbf{Complex representation}
   		&
   		\textbf{Operation}
   		&
   		\textbf{Recovered quantity}
   		\\
   		\hline
   		
   		Complex step:
   		Cauchy
   		&
   		Harmonic Cauchy problem
   		&
   		\(\displaystyle
   		\frac{1}{h}
   		\ImagPart\!\left[F(x+\imag h)\right]\)
   		&
   		\(\partial_xF(x)\), as \(h\to0^+\)
   		\\
   		
   		Complex step:
   		strip
   		&
   		Strip Poisson reconstruction
   		&
   		\(\displaystyle
   		\frac{1}{h}
   		\ImagPart\!\left[F(x+\imag h)\right]\)
   		&
   		\(\partial_xF(x)\)
   		\\
   		
   		Boundary data \(u_0\)
   		&
   		Poisson/Cauchy extension
   		&
   		\(\displaystyle
   		(P_y*u_0)(x)\)
   		&
   		\(u_0\), as \(y\to0^+\)
   		\\
   		
   		Finite measure \(\mu\)
   		&
   		Stieltjes transform \(S_\mu\)
   		&
   		\(\displaystyle
   		\frac{1}{\pi}
   		\ImagPart\!\left[S_\mu(x+\imag y)\right]\)
   		&
   		\(\mu\), as \(y\to0^+\)
   		\\
   		
   		Spectral measure \(\mu_u\)
   		&
   		Scalar resolvent
   		&
   		\(\displaystyle
   		\frac{1}{\pi}
   		\ImagPart\!
   		\left\langle
   		\left(P-(x+\imag y)\right)^{-1}u,u
   		\right\rangle\)
   		&
   		\(\mu_u\), as \(y\to0^+\)
   		\\
   		\hline
   	\end{tabular}
   \end{table}
    
	\section{Conclusions}
	
    The complex-step method, introduced by \citet{Squire1998}, is commonly
    presented as a numerical differentiation technique. In this work, we
    have shown that it admits a natural interpretation within the classical
    theory of harmonic reconstruction. From this perspective, the imaginary
    perturbation arises from the analytic continuation of a holomorphic
    function, whose real and imaginary components are related through the
    Cauchy--Riemann equations and satisfy the Laplace equation.
    
    The complex-step method admits two complementary harmonic
    interpretations. First, it can be formulated as a Cauchy problem on the
    real boundary, where the imaginary component vanishes and its normal
    derivative provides the derivative information of the real component.
    The complex-step expression then follows directly from the recovery of
    this normal Cauchy datum from values of the imaginary component
    arbitrarily close to the real axis. Second, the method can be formulated
    as a Dirichlet reconstruction problem in a strip, where the imaginary
    component vanishes at the lower boundary and its value at the upper
    boundary is given by the complex-step perturbation. The strip Poisson
    and conjugate Poisson kernels provide the corresponding harmonic
    reconstruction and explicitly relate the derivative information on the
    real axis to the imaginary component within the strip.
    
    A related reconstruction problem arises in the upper half-plane. There,
    arbitrary real boundary data are reconstructed through the Poisson and
    conjugate Poisson kernels. Extending this construction from ordinary
    boundary functions to finite measures leads naturally to the Stieltjes
    transform and its inversion formula. The same measure-theoretic
    structure appears in spectral theory, where scalar matrix elements of
    the resolvent are Stieltjes transforms of the associated spectral
    measures.
    
    The results therefore reveal several related harmonic reconstruction
    structures. The Cauchy and strip formulations provide two complementary
    interpretations of the complex-step method, while the upper-half-plane
    construction connects harmonic boundary reconstruction with finite
    measures, the Stieltjes transform, and spectral measures. These
    constructions share the harmonic-conjugate structure of holomorphic
    functions, while their different domains and boundary conditions
    determine the corresponding reconstruction problem and the quantity
    that is recovered.
    
    The main contribution of this work is the identification of the
    harmonic reconstruction structure underlying the complex-step method.
    Its Cauchy formulation identifies complex-step differentiation with the
    recovery of a normal boundary datum, while its strip formulation
    provides an explicit reconstruction in terms of the Poisson and
    conjugate Poisson kernels. Together, these interpretations place the
    complex-step method within the classical theory of harmonic and
    holomorphic reconstruction, while preserving its distinction from the
    related reconstruction problems for boundary functions, finite
    measures, and spectral measures.

	\section{Acknowledgments}
	
	C.N. acknowledges financial support from the IPGP. The authors gratefully acknowledge an anonymous reviewer for very constructive comments that helped us to improve the manuscript.

	\footnotesize
	\bibliographystyle{apalike}
	\bibliography{Biblio}

\end{document}